%
%
%
%
\documentclass{amsart}

\newtheorem{theorem}{Theorem}[section]
\newtheorem{lemma}[theorem]{Lemma}
\newtheorem{proposition}[theorem]{Proposition}
\newtheorem{corollary}[theorem]{Corollary}

\theoremstyle{definition}
\newtheorem{definition}[theorem]{Definition}

\theoremstyle{remark}
\newtheorem{remark}[theorem]{Remark}

\numberwithin{equation}{section}



\begin{document}

\title{Dynamics of Orthonormal Bases Associated to Basins of Attraction}

\author{James Tipton}
\address{Department of Mathematical Sciences, 101 Mathematics Building, University of Montana, Missoula, 59812}
\email{james.tipton@mso.umt.edu}

\subjclass[2010]{Primary 40A20, 47B32; Secondary 37F50}

\date{December 23, 2016}

\keywords{Infinite products, Cuntz algebras, dynamical systems, Julia sets}

\begin{abstract}
In \cite{alpay2015infinite}, a technique was developed which allows for the construction of a reproducing kernel Hilbert space on basins of attraction containing $0$.  When the right conditions are met, an explicit orthonormal basis can be constructed using a particular class of operators.  It is natural then to consider how the orthonormal basis changes as we let the basin of attraction vary.  We will consider this question for the basins of attraction containing $0$ of the family of polynomials $\mathcal{F} = \{az^{2^{n+2}}-2az^{2^{n+1}}:a\neq0\}$, where $n\in\mathbb{N}$.
\end{abstract}

\maketitle

\tableofcontents

\section{Introduction}
If $R$ is a rational map, then associated to any attracting fixed point, $\zeta$, of $R$ is the basin of attraction containing $\zeta$, which we will denote $B_{R,\zeta}$.  Following the approach of \cite{alpay2015infinite}, it was shown in \cite{jet2016} that whenever $R$ is a polynomial with an attracting fixed point at $0$, one obtains a kernel function of the form
\[K(z,w) = \prod_{i=0}^\infty\left(1+\left(R^{\circ i}(z)\overline{R^{\circ i}(w)}\right)^\alpha\right)\]

\noindent for any positive integer $\alpha$, and where $R^{\circ i}$ is shorthand for $R$ composed with itself $i$ times.  Associated to each kernel function is a unique reproducing kernel Hilbert space.  On the reproducing kernel Hilbert spaces of the kernel functions above, we have the following operators:
\[S_0(f)(z) = f(R(z))\quad\text{ and }\quad S_1(f)(z) = z^\alpha f(R(z))\]

\noindent Following the work in \cite{alpay2015infinite}, if these operators satisfy the Cuntz relations,
\[S^{*}_iS_j = \delta_{ij}I\quad\text{ and}\quad\sum S_iS^{*}_i = I,\]

\noindent then the functions given by $S_{i_1}\cdots S_{i_m}\mathbf{1}(z)$, where $m\in\mathbb{N}$, $i_j\in\{0,1\}$, and $\mathbf{1}(z)=1$, form an orthonormal basis for the reproducing kernel Hilbert space associated to the kernel function $K(z,w)$.  It was shown in \cite{jet2016} that this is the case for the family of polynomials $\mathcal{F} = \{az^{2^{n+2}}-2az^{2^{n+1}}:a\neq0\}$, if we take the exponent $\alpha$ in $K(z,w)$ to be $2^n$.  We will show that the corresponding orthonormal bases vary continuously on the parameter $a$.\

\section{Basic Definitions and Propositions}

\subsection{Complex Dynamics}
Two excellent references for a more detailed and complete theory of complex dynamics include \cite{beardon2000iteration} and \cite{milnor2006dynamics}.  We will always denote by $R$ and $S$ a rational map.

\begin{definition}
If $\zeta$ is a complex number satisfying the properties $R(\zeta) = \zeta$ and $|R^{'}(\zeta)|<1$, then we call $\zeta$ an attracting fixed point of $R$.  In particular, if $R^{'}(\zeta) = 0$, then $\zeta$ is called a super-attracting fixed point of $R$.
\end{definition}

\noindent Associated to any attracting fixed point is the set of those complex numbers which are ``attracted'' to the fixed point.

\begin{definition}
Suppose $R$ has an attracting fixed point $\zeta$.  The basin of attraction of $R$ containing the attracting fixed point $\zeta$ is given by

\[B_{R,\zeta} = \left\{z\in\mathbb{C}:\lim_{i\rightarrow\infty}R^{\circ i}(z)=\zeta\right\}\]

\noindent where $R^{\circ i}$ denotes $R$ composed with itself $i$ times.
\end{definition}

Some basins of attraction at $0$ have a seemingly uncommon characterization:

\begin{lemma}
Suppose $R$ is a rational map with an attracting fixed point at $0$.  If $\infty$ does not belong to $B_{R,0}$, then
\[B_{R,0} = \left\{z\in\mathbb{C}:\sum_{i=0}^\infty |R^{\circ i}(z)|<\infty\right\}\]

\end{lemma}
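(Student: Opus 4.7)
The plan is to prove both inclusions of the set equality, using the hypothesis $\infty \notin B_{R,0}$ only for the forward direction.

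For the forward inclusion $B_{R,0} \subseteq \{z : \sum |R^{\circ i}(z)| < \infty\}$, I start with $z \in B_{R,0}$. First I would argue that the entire orbit $\{R^{\circ i}(z)\}$ lies in $\mathbb{C}$: if some iterate equalled $\infty$, then the tail of the orbit of $z$ would coincide with the orbit of $\infty$, forcing the orbit of $\infty$ to converge to $0$ and thus putting $\infty \in B_{R,0}$, contrary to the hypothesis. Next I would exploit the attracting property: since $|R'(0)| < 1$, the Taylor expansion $R(w) = R'(0)w + O(w^2)$ gives, for any fixed $\mu \in (|R'(0)|,1)$, a neighborhood $U$ of $0$ on which $|R(w)| \leq \mu |w|$. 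Since $R^{\circ i}(z) \to 0$, all but finitely many iterates lie in $U$, say those with $i \geq N$. Iterating the bound yields $|R^{\circ(N+k)}(z)| \leq \mu^k |R^{\circ N}(z)|$, so the tail of the series is dominated by a geometric series with ratio $\mu < 1$. Adding the finite initial segment gives convergence of the full series.

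For the reverse inclusion, I would simply observe that if $\sum_{i=0}^\infty |R^{\circ i}(z)| < \infty$, then every term is finite (so the orbit stays in $\mathbb{C}$) and the general term tends to zero. This forces $R^{\circ i}(z) \to 0$, hence $z \in B_{R,0}$ directly from the definition; no use of the hypothesis $\infty \notin B_{R,0}$ is required here.

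The only subtle step is the use of the hypothesis in the forward direction. If one allowed $\infty \in B_{R,0}$, then finite points whose orbit hits $\infty$ at some step would still satisfy $R^{\circ i}(z) \to 0$ in the sphere topology, yet the associated series $\sum |R^{\circ i}(z)|$ would contain the term $+\infty$ and diverge, breaking the equality. Beyond this bookkeeping, the proof is routine: it reduces to the standard linearization-at-a-fixed-point estimate $|R(w)| \leq \mu |w|$ near $0$, combined with the geometric series test.
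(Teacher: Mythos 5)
Your proposal is correct and follows essentially the same route as the paper: both inclusions are handled the same way, with the key estimate being that $|R(w)|/|w|$ tends to $|R'(0)|<1$ along the orbit, which the paper packages as the ratio test applied to $\sum_i |R^{\circ i}(\zeta)|$ and you package as a geometric-domination bound $|R(w)|\leq\mu|w|$ on a neighborhood of $0$. Your write-up is slightly more careful than the paper's in two minor respects: it explicitly locates where the hypothesis $\infty\notin B_{R,0}$ is used, and it avoids the division by $|R^{\circ j}(\zeta)|$ in the ratio test, which would be problematic if the orbit lands exactly on $0$.
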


\begin{proof}
We first note that any such degree $n$ rational map must be of the form

\[R(z) = \dfrac{\sum_{i=1}^na_iz^i}{\sum_{i=0}^mb_iz^i},\qquad \left|\dfrac{a_1}{b_0}\right|<1,\quad a_n,b_m,b_0\neq0\]

\begin{itemize}
\item\noindent\underline{$\mathbf{\Omega\subseteq B_{R,0}}$}\\

If $z\in\Omega$, then $\lim_{i\rightarrow\infty}R^{i}(z) = 0$, and thus $\Omega$ is contained within $B_{R,0}$.\\

\item\noindent\underline{$\mathbf{B_{R,0}\subseteq\Omega}$}\\

To obtain the reverse inclusion, we use the ratio test.  Let $\zeta\in B_{R,0}$ and let $z_j = R^{\circ j}(\zeta)$.  We then compute

\[\lim_{j\rightarrow\infty}\dfrac{|R^{\circ j+1}(\zeta)|}{|R^{\circ j}(\zeta)|}=\lim_{j\rightarrow\infty}\dfrac{|R(z_j)|}{|z_j|} = \lim_{n\rightarrow\infty}\dfrac{|\sum_{i=0}^{n-1}a_{i+1}z_j^i|}{|\sum_{i=0}^mb_iz_j^i|}=\left|\dfrac{a_1}{b_0}\right|<1\]

\noindent Thus we have that the series

\[\sum_{i=0}^\infty|R^{\circ i}(\zeta)|\]

\noindent converges absolutely, giving $\zeta\in\Omega$, which completes the proof.

\end{itemize}
\end{proof}

\begin{remark}
When $0<R^{'}(0)<1$, the above lemma easily follows from the proof of Koenigs' Linearization Theorem.  However, through the ratio test, one can see that the lemma actually holds for $R^{'}(0) = 0$ as well.  The usefulness of this lemma will become apparent when combined Lemma 3.2.
\end{remark}





\subsection{Reproducing Kernel Hilbert Spaces}

A classic reference for the theory of kernel functions is \cite{aronszajn1950theory}, while a more modern approach is given in \cite{paulsen2016introduction}.

\begin{definition}
Suppose $X$ is a set.  A reproducing kernel Hilbert space on $X$ is a Hilbert space of functions, $\mathcal{H}$, on $X$ for which every linear evaluation functional is bounded.
\end{definition}

\noindent From this one is able to conclude the existence of a unique function on $X$, $K(z,w)$ satisfying
\[h(w) = \langle h(z), K(z,w) \rangle\]

\noindent for all $h\in\mathcal{H}$.  This function has many names, including kernel function, reproducing kernel, and positive definite function.  It can be shown that there is a one-to-one correspondence between the collection of all positive definite functions on a set $X$ and the collection of all reproducing kernel Hilbert spaces on the set $X$.  For this reason, $(\mathcal{H}, K)$ is often referred to as a reproducing kernel pair on $X$.  Reproducing kernel Hilbert spaces have many nice properties; among them is the following:

\begin{theorem}
Suppose $(\mathcal{H},K)$ is a reproducing pair on some set $X$.  If $\{e_i\}_{i\in I}$ is an orthonormal basis of $\mathcal{H}$, then
\[K(z,w) = \sum_{i\in I}e_i(z)\overline{e_i(w)}\]
\end{theorem}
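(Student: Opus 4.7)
The plan is to prove this by expanding the function $K(\cdot,w)$ in the given orthonormal basis and identifying the Fourier coefficients via the reproducing property. The key observation is that for each fixed $w \in X$, the map $z \mapsto K(z,w)$ is itself a member of $\mathcal{H}$ (this is guaranteed by the construction that produces $K$ from the bounded evaluation functionals via the Riesz representation theorem).

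First I would fix $w \in X$ and write down the Parseval expansion of $K(\cdot,w)$ with respect to $\{e_i\}_{i \in I}$, namely
\[
K(\cdot,w) = \sum_{i \in I} \langle K(\cdot,w), e_i \rangle \, e_i,
\]
with convergence in the Hilbert space norm. Next I would compute the Fourier coefficient $\langle K(\cdot,w), e_i \rangle$ by invoking the reproducing property stated in the excerpt: since $e_i(w) = \langle e_i, K(\cdot,w) \rangle$, taking complex conjugates gives $\langle K(\cdot,w), e_i \rangle = \overline{e_i(w)}$. Substituting this back yields $K(\cdot,w) = \sum_i \overline{e_i(w)}\, e_i$ in the norm of $\mathcal{H}$.

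The final step is to pass from norm convergence to the pointwise identity at an arbitrary point $z \in X$. This is where one must be slightly careful: I would apply the evaluation functional at $z$ to both sides. Since every linear evaluation functional on $\mathcal{H}$ is bounded (by the very definition of a reproducing kernel Hilbert space), it is continuous, and therefore commutes with the norm-convergent sum. This gives
\[
K(z,w) = \sum_{i \in I} \overline{e_i(w)}\, e_i(z) = \sum_{i \in I} e_i(z)\overline{e_i(w)},
\]
as desired.

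There is no real obstacle in this proof — it is a classical computation — but the one subtle point worth flagging is precisely the interchange of pointwise evaluation with the (possibly uncountable) orthonormal expansion. Justifying this cleanly requires citing the boundedness of the evaluation functional rather than any pointwise convergence argument, and it is also the step that silently uses the uniqueness of the reproducing kernel. Once this is noted, the argument is just a direct application of Parseval's formula together with the reproducing property, and one obtains the claimed series representation of $K(z,w)$.
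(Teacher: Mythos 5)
Your proof is correct and is the standard argument: expand $K(\cdot,w)$ via Parseval, identify the coefficients $\langle K(\cdot,w),e_i\rangle=\overline{e_i(w)}$ from the reproducing property, and pass to pointwise equality using the continuity of the evaluation functionals. The paper states this theorem without proof (it is a classical fact from the cited references), so there is no alternative argument to compare against; your handling of the interchange of evaluation with the possibly uncountable sum is exactly the right point to be careful about, though that step needs only boundedness of evaluation, not uniqueness of the kernel.
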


\noindent Essentially, if one knows an orthonormal basis for $\mathcal{H}$, then one knows the kernel function $K$ as well.

\section{Kernel Functions on Basins of Attraction}

 In this section, we recall results from \cite{alpay2015infinite} which are relevant to later sections.  In what follows, we will let $\mathcal{U}$ be a topological space, $(\mathcal{H}_k, k)$ a reproducing kernel pair on $\mathcal{U}$ with orthonormal basis $\{e_i\}_{i\in I}$, and $R$ an endomorphism on $\mathcal{U}$ such that there exists $l\in\mathcal{U}$ satisfying $\lim\limits_{n\rightarrow\infty}R_n(z) = l$ for all $z\in\mathcal{U}$.  The following proposition says that under certain conditions one may obtain a kernel function on $\mathcal{U}$ as an infinite product involving the iteration of the endomorphism $R$.

\begin{proposition}
Suppose that $K(z,w)$ is a continuous map on $\mathcal{U}$, not identically $0$, such that $K(z,w) = k(z,w)K(R(z), R(w))$.  If $K(l,l)>0$, then $K(z,w)$ is a kernel function on $\mathcal{U}$ and 
\[K(z,w) = \left(\prod_{n=0}^\infty\left(\sum_{i\in I}e_i(R_n(z))\overline{e_i(R_n(w))}\right)\right)K(l,l)\]
\end{proposition}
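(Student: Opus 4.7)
My plan is to iterate the functional equation $K(z,w) = k(z,w)K(R(z),R(w))$ to produce a telescoping identity, then pass to the limit using the convergence of the iterates to $l$ together with the continuity of $K$. A straightforward induction on $n$, using the convention $R_0 = \mathrm{id}$, yields
\[
K(z,w) = \left(\prod_{m=0}^{n-1} k(R_m(z), R_m(w))\right) K(R_n(z), R_n(w))
\]
for every $n\geq 0$; each step is a single application of the given identity with $z,w$ replaced by $R_m(z),R_m(w)$.

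Next I would let $n\to\infty$. By hypothesis $R_n(z)\to l$ and $R_n(w)\to l$, and since $K$ is continuous, $K(R_n(z),R_n(w))\to K(l,l) > 0$. The left-hand side does not depend on $n$, and the trailing factor has a nonzero limit, so the partial products $\prod_{m=0}^{n-1} k(R_m(z),R_m(w))$ must converge to $K(z,w)/K(l,l)$. Rearranging gives
\[
K(z,w) = \left(\prod_{m=0}^{\infty} k(R_m(z), R_m(w))\right) K(l,l),
\]
and replacing each $k(R_m(z),R_m(w))$ by the expansion $\sum_{i\in I} e_i(R_m(z))\overline{e_i(R_m(w))}$ guaranteed by Theorem 2.5 produces the formula in the proposition.

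To finish, I would verify that $K$ is actually a kernel function, i.e., positive definite. Each factor $k(R_m(z),R_m(w))$ is positive definite as the pullback of the positive definite function $k$ along the map $R_m$. By the Schur product theorem each finite product is positive definite, and positive definiteness is preserved under pointwise limits, so the infinite product is positive definite as well. Multiplication by the positive scalar $K(l,l)$ preserves this, so $K(z,w)$ is a kernel on $\mathcal{U}$.

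The main obstacle, as I see it, is ensuring that the infinite product genuinely converges and does so to a nonzero value; this is exactly where the hypothesis $K(l,l) > 0$ is essential. Without it the trailing factor $K(R_n(z),R_n(w))$ could tend to $0$, and the factorization above would not isolate the partial products' limit. Once nondegeneracy at $l$ is in hand, the remaining steps are routine consequences of continuity, the iterated functional equation, and standard kernel calculus.
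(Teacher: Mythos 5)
Your proof is correct: the telescoping identity from iterating the functional equation, passing to the limit via continuity of $K$ and $R_n(z)\to l$ with $K(l,l)>0$ to isolate the infinite product, and establishing positive definiteness through pullbacks, the Schur product theorem, and pointwise limits is exactly the standard argument for this result. Note that the paper itself states this proposition without proof (it is recalled from \cite{alpay2015infinite}), and your argument matches the one given there in substance.
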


\noindent The following lemma will allow us to apply Proposition 3.1 to basins of attraction at $0$.

\begin{lemma}
If $k(z,w) = 1 + t(z,w)$ where $t$ is also a kernel function on $\mathcal{U}$ and the set
\[\Omega = \left\{z\in\mathcal{U}: \sum_{n=0}^\infty|t(R_n(z),R_n(z)|<\infty\right\}\]
\noindent is non-empty, then the infinite product
\[K(z,w) = \prod_{n=0}^\infty k(R_n(z),R_n(w))\]
\noindent converges on $\Omega$, and satisfies $K(z,w) = k(z,w)K(R(z), R(w))$ for all $z,w\in\Omega$.
\end{lemma}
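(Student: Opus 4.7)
The plan is to establish absolute convergence of the infinite product on $\Omega$ via the standard criterion for infinite products, and then derive the functional equation by peeling off the first factor.

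First I would argue that for any $z,w \in \Omega$ the series $\sum_{n=0}^\infty |t(R_n(z),R_n(w))|$ converges. Since $t$ is positive definite, the Cauchy--Schwarz inequality for kernels gives
\[|t(R_n(z),R_n(w))|^2 \le t(R_n(z),R_n(z))\cdot t(R_n(w),R_n(w)).\]
Writing $a_n = t(R_n(z),R_n(z))$ and $b_n = t(R_n(w),R_n(w))$ (both non-negative), the Cauchy--Schwarz inequality for sums then yields
\[\sum_{n=0}^\infty \sqrt{a_n b_n} \le \left(\sum_{n=0}^\infty a_n\right)^{1/2}\left(\sum_{n=0}^\infty b_n\right)^{1/2} < \infty,\]
because $z, w \in \Omega$. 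The standard theory of infinite products now guarantees that $\prod_{n=0}^\infty (1+t(R_n(z),R_n(w)))$ converges absolutely, so $K(z,w)$ is well defined on $\Omega\times\Omega$.

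Next I would verify that $\Omega$ is forward invariant under $R$, so that the proposed functional equation even makes sense. If $z \in \Omega$, then since $R_n \circ R = R_{n+1}$,
\[\sum_{n=0}^\infty t(R_n(R(z)),R_n(R(z))) = \sum_{n=1}^\infty t(R_n(z),R_n(z)) < \infty,\]
so $R(z) \in \Omega$, and similarly for $w$.

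Finally, for the functional equation I would simply separate the $n=0$ factor and reindex via $R_n = R_{n-1}\circ R$:
\[K(z,w) = k(z,w)\prod_{n=1}^\infty k(R_n(z),R_n(w)) = k(z,w)\prod_{m=0}^\infty k(R_m(R(z)),R_m(R(w))) = k(z,w)K(R(z),R(w)),\]
with the rearrangement justified by the absolute convergence established above. The only genuine obstacle I foresee is the Cauchy--Schwarz step: without exploiting the positive definiteness of $t$, the hypothesis $\sum t(R_n(z),R_n(z)) < \infty$ on the diagonal does not immediately yield convergence of the off-diagonal tail needed to make sense of $K(z,w)$; everything else is bookkeeping.
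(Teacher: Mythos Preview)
The paper does not actually prove this lemma: Section~3 explicitly recalls results from \cite{alpay2015infinite}, and Lemma~3.2 is stated there without proof. So there is no in-paper argument to compare against.

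That said, your proof is correct and is the standard one. The kernel Cauchy--Schwarz inequality $|t(u,v)|^2\le t(u,u)\,t(v,v)$ is exactly what is needed to pass from the diagonal hypothesis defining $\Omega$ to control of the off-diagonal tails, and combining it with the $\ell^2$ Cauchy--Schwarz inequality gives $\sum_n |t(R_n(z),R_n(w))|<\infty$ for $z,w\in\Omega$; absolute convergence of $\prod_n(1+t(R_n(z),R_n(w)))$ then follows from the elementary infinite-product criterion. Your check that $\Omega$ is forward invariant under $R$ is a necessary detail (without it $K(R(z),R(w))$ would not even be defined), and the functional equation is indeed just peeling off the $n=0$ factor and reindexing, which absolute convergence justifies. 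There are no gaps.
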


The next few result aim for an explicit construction of an orthonormal basis for the reproducing kernel Hilbert space $\mathcal{H}$ associated to the kernel function $K$ obtained in Proposition 3.1.  Suppose that for every $z\in\Omega$, the map $R$ satisfies the cardinality condition
\[M(z) = \text{Card}\{\zeta\in\Omega:\,R(\zeta) = z\}<\infty\]

\noindent and either
\[\dfrac{1}{M(z)}\sum_{R(\zeta)=z}e_i(\zeta)e_j(\zeta)^{*} = \delta_{ij},\qquad \forall i,j\in I \tag{$\dagger$}\]

\noindent or
\[\dfrac{1}{M(z)}\sum_{R(\zeta)=z}e_i(\zeta)e_j(\zeta) = \delta_{ij},\qquad \forall i,j\in I \tag{$\ddagger$}\]

\noindent These conditions were designed with the Cuntz relations in mind.  Details on these relations can be found in \cite{cuntz1977simplec}.

\begin{theorem} 
Consider the operators $\{S_i\}_{i\in I}$ defined on $\mathcal{H}$ by $S_i(f)(z) = e_i(z)f(R(z))$. If either $\dagger$ or $\ddagger$ holds, then the operators $\{S_i\}_{i\in I}$ satisfy the Cuntz relations.
\end{theorem}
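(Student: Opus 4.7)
The plan is to compute the adjoint $S_i^{*}$ in two different ways and then use the resulting formulas to verify the Cuntz relations by direct calculation; the hypothesis $\dagger$ or $\ddagger$ will turn out to be precisely what reconciles these two expressions for $S_i^{*}$.

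First, I would derive a formula for $S_i^{*}$ on reproducing kernels. Using the adjoint identity together with the reproducing property,
\[ \langle h, S_i^{*} K(\cdot, w) \rangle = \langle S_i h, K(\cdot, w) \rangle = e_i(w) h(R(w)) = \langle h,\, \overline{e_i(w)}\, K(\cdot, R(w)) \rangle, \]
so $S_i^{*} K(\cdot, w) = \overline{e_i(w)}\, K(\cdot, R(w))$. With this identity in hand, the relation $\sum_i S_i S_i^{*} = I$ follows essentially for free: applying each $S_i$ to both sides, summing over $i$, and using $\sum_i e_i(z) \overline{e_i(w)} = k(z,w)$ recovers exactly the recursion $K(z,w) = k(z,w) K(R(z), R(w))$ from Proposition 3.1. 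Because the kernel functions are dense in $\mathcal{H}$, this verifies the second Cuntz relation without ever touching $\dagger$ or $\ddagger$.

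For the orthogonality relation $S_i^{*} S_j = \delta_{ij} I$, I would establish a second, more computational, formula for the adjoint, namely the pre-image averaging formula
\[ (S_i^{*} f)(z) = \frac{1}{M(z)} \sum_{R(\zeta) = z} \overline{e_i(\zeta)}\, f(\zeta) \]
under hypothesis $\dagger$, and with $e_i(\zeta)$ unconjugated under $\ddagger$. To verify that this candidate agrees with $\overline{e_i(w)}\, K(\cdot, R(w))$ when applied to a kernel function $K(\cdot, w)$, I would substitute the recursion $K(\zeta, w) = k(\zeta, w)\, K(z, R(w))$ (using $R(\zeta) = z$), pull the factor $K(z, R(w))$ outside the sum, expand $k(\zeta, w) = \sum_l e_l(\zeta) \overline{e_l(w)}$, and apply the cardinality hypothesis to collapse the resulting double sum to a single term $\overline{e_i(w)}$. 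Since the two linear operators agree on the dense span of kernel functions, they coincide on all of $\mathcal{H}$.

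With the averaging formula established, the orthogonality relation is then a two-line computation:
\[ (S_i^{*} S_j f)(z) = \frac{1}{M(z)} \sum_{R(\zeta) = z} \overline{e_i(\zeta)}\, e_j(\zeta)\, f(R(\zeta)) = f(z) \cdot \frac{1}{M(z)} \sum_{R(\zeta) = z} \overline{e_i(\zeta)}\, e_j(\zeta) = \delta_{ij}\, f(z), \]
where the constraint $R(\zeta) = z$ lets us pull $f(z)$ out of the sum before invoking $\dagger$; the argument under $\ddagger$ is identical except for the absence of one conjugate. The main obstacle is the intermediate step identifying the two formulas for $S_i^{*}$, since this is the only point where the cardinality hypothesis actually enters; once the averaging formula is validated on the spanning set of kernel functions via the recursion and $\dagger$ (or $\ddagger$), both Cuntz relations become essentially mechanical.
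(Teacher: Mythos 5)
The paper does not actually prove this statement: Theorem 3.3 is recalled from \cite{alpay2015infinite} and stated without proof, so there is no argument of the author's to compare against. Your reconstruction is correct and is essentially the standard argument from that source: the identity $S_i^{*}K(\cdot,w)=\overline{e_i(w)}\,K(\cdot,R(w))$, the observation that $\sum_i S_iS_i^{*}=I$ then falls out of the recursion $K(z,w)=k(z,w)K(R(z),R(w))$ with no need for $\dagger$ or $\ddagger$, and the pre-image averaging formula for $S_i^{*}$ (conjugated under $\dagger$, unconjugated under $\ddagger$) doing all the work for $S_i^{*}S_j=\delta_{ij}I$. Two small points you should make explicit if you write this up. First, the theorem implicitly assumes $S_i$ maps $\mathcal{H}$ into $\mathcal{H}$; boundedness of $S_i$ is then not needed in advance, since $h\mapsto e_i(w)h(R(w))$ is a bounded functional for each $w$, which already puts every $K(\cdot,w)$ in the domain of $S_i^{*}$, and the relation $S_i^{*}S_j=\delta_{ij}I$ retroactively shows each $S_i$ is an isometry. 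Second, your step ``the two operators agree on the dense span of kernels, hence everywhere'' needs one more sentence: the averaging operator is a finite linear combination of evaluation functionals (by the cardinality condition $M(z)<\infty$), so it is continuous from norm convergence to pointwise convergence, and since $S_i^{*}$ is bounded and norm convergence in an RKHS implies pointwise convergence, the two limits agree pointwise on all of $\mathcal{H}$. With those remarks added, the proof is complete.
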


With these operators one can construct an orthonormal basis of $\mathcal{H}$.  First observe that the function $\mathbf{1}(z) = 1$ is in $\mathcal{H}$.  Let $\rho = \dim\mathcal{H}_k$ and let $J$ be an index set with cardinality equal to $\rho$.  For each natural number $N$, let $S_{\iota_N} = S_{j_1}\cdots S_{j_N}$, where $\iota_N=(j_1,\dots,j_N)\in J^N$.  Next define an index set $J^\infty$ by
\[J^\infty = \bigcup_{N=1}^\infty J^N\]

\noindent Finally, for each $v\in J^\infty$ we define a function $b_v$ by
\[b_{v}(z) =S_{v}\mathbf{1}(z)\]

\noindent The last theorem of this section states under what conditions these functions form an orthonormal basis for $\mathcal{H}$.

\begin{theorem} \label{onb}
If the operators $\{S_i\}_{i\in I}$ satisfy the Cuntz relations then the collection of functions $\{b_v : v\in J^\infty\}$ form an orthonormal basis for $\mathcal{H}$ and thus
\[K(z, w) = \sum_{v\in J^\infty}b_v(z)\overline{b_v(w)}\]

\end{theorem}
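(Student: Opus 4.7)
The plan is to check orthonormality of $\{b_v\}_{v \in J^\infty}$ and density of its span in $\mathcal{H}$ separately; once both are in hand, the kernel expansion $K(z,w) = \sum_v b_v(z)\overline{b_v(w)}$ is immediate from Theorem 2.3.

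For orthonormality, I would expand $\langle b_v, b_w\rangle_{\mathcal{H}} = \langle S_v \mathbf{1}, S_w \mathbf{1}\rangle_{\mathcal{H}} = \langle \mathbf{1}, S_v^* S_w \mathbf{1}\rangle_{\mathcal{H}}$ and then simplify $S_v^* S_w = S_{v_N}^* \cdots S_{v_1}^* S_{w_1} \cdots S_{w_M}$ by repeatedly invoking the first Cuntz relation $S_i^* S_j = \delta_{ij} I$, cancelling paired letters from the middle outward. When the two words have equal length this telescopes to $\delta_{v,w} I$, yielding $\langle b_v, b_v\rangle = \|\mathbf{1}\|^2 = 1$ and vanishing off the diagonal. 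When one word is a proper prefix of the other, a leftover $S_u$ or $S_u^*$ remains and the residual inner product of $\mathbf{1}$ with $b_u$ must be shown to vanish; this is where the orthogonality of $\{e_i\}$ in $\mathcal{H}_k$ should enter.

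For density, I would iterate the second Cuntz relation $\sum_i S_i S_i^* = I$ to obtain $\sum_{v \in J^N} S_v S_v^* = I$ for every $N$, giving each $f \in \mathcal{H}$ the decomposition $f = \sum_{v \in J^N} S_v(S_v^* f)$. The closed linear span of $\{b_v\}$ is invariant under every $S_i$, since $S_i b_v = b_{(i,v)}$, and under every $S_i^*$ by a further application of $S_i^* S_j = \delta_{ij} I$. If one can also show that $\mathbf{1}$ lies in this closed span, say by iterating $\mathbf{1} = \sum_i S_i (S_i^* \mathbf{1})$ and expanding each $S_i^* \mathbf{1}$ in turn, then $\mathbf{1}$ is cyclic for the Cuntz-algebra action and the span must be all of $\mathcal{H}$.

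I expect density to be the main obstacle. Establishing cyclicity of $\mathbf{1}$ requires nontrivial control of $S_i^* \mathbf{1}$ and of the iterated decompositions. A secondary subtlety is that distinct $v \in J^\infty$ can produce the same function $b_v$ (for instance, if $e_i = \mathbf{1}$ for some $i$ then $b_{(i)} = b_{(i,i)} = \cdots$), so the ``collection'' in the statement must be read as a set of distinct vectors, and the Parseval balance must be reconciled with this identification.
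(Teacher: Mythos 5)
First, a point of reference: the paper does not prove this theorem at all --- it is stated in Section 3 as a result recalled verbatim from \cite{alpay2015infinite}, so there is no in-paper argument to compare yours against. Judging your proposal on its own merits: the orthonormality half is on the right track. The middle-out cancellation via $S_i^*S_j=\delta_{ij}I$ is correct, and the leftover prefix case is resolved by noting that in this setting $e_0=\mathbf{1}$, hence $S_0\mathbf{1}=\mathbf{1}$, hence $S_0^*\mathbf{1}=\mathbf{1}$ and $S_j^*\mathbf{1}=0$ for $j\neq 0$; inductively $\langle b_u,\mathbf{1}\rangle$ vanishes unless $u$ is a string of zeros, in which case $b_u=\mathbf{1}$ anyway. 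Your observation that distinct words can yield the same function (so that the sum $\sum_{v\in J^\infty}b_v\overline{b_v}$ only makes sense over distinct functions, or equivalently as a limit over $J^N$) is a genuine and necessary caveat that the statement glosses over.

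The gap is in completeness. You reduce density to the cyclicity of $\mathbf{1}$ for the Cuntz family and then stop, conceding that this is ``the main obstacle''; but cyclicity is precisely the content of the theorem, so nothing has been proved. Moreover, cyclicity cannot follow from the Cuntz relations alone --- one must use the dynamical hypothesis $R^{\circ N}(z)\to l$ and the infinite-product structure of $K$. The missing idea is the telescoping identity
\[\sum_{v\in J^N}b_v(z)\overline{b_v(w)}=\prod_{m=0}^{N-1}\Bigl(\sum_{i\in I}e_i(R^{\circ m}(z))\overline{e_i(R^{\circ m}(w))}\Bigr)=\prod_{m=0}^{N-1}k(R^{\circ m}(z),R^{\circ m}(w)),\]
combined with the iterated functional equation $K(z,w)=\bigl(\prod_{m=0}^{N-1}k(R^{\circ m}(z),R^{\circ m}(w))\bigr)K(R^{\circ N}(z),R^{\circ N}(w))$. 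Letting $N\to\infty$ and using $K(R^{\circ N}(z),R^{\circ N}(w))\to K(l,l)=1$ shows that the partial sums of $\sum_v b_v(z)\overline{b_v(w)}$ converge to $K(z,w)$; together with orthonormality this forces the closed span of $\{b_v\}$ to contain every $K(\cdot,w)$ and hence to be all of $\mathcal{H}$. Equivalently, one checks $\|f\|^2-\sum_{v\in J^N}|\langle f,b_v\rangle|^2=\sum_{v\in J^N}\|(I-P_0)S_v^*f\|^2\to 0$, where $P_0$ is the projection onto $\mathbb{C}\mathbf{1}$. Without some version of this limiting argument your density step does not close.
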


\section{A Family of Examples}

A key observation, due to Lemma 2.3, comes from realizing, in the context of complex dynamics, that the set $\Omega$ from Lemma 3.2 is in fact the basin of attraction at $0$ for a rational map $R$.  As shown in \cite{jet2016}, taking $t(z,w) = (z\overline{w})^\alpha$ we obtain:

\begin{theorem}
Let $R(z) = \sum_{i=1}^nc_iz^i$ such that $|c_1| < 1$ and suppose $1<\alpha\in\mathbb{N}$.  Then 
\[K(z, w) = \prod_{n=0}^\infty\left(1 + \left[R^{\circ n}(z)\overline{R^{\circ n}(w)}\right]^\alpha\right)\]
is a kernel function on the basin of attraction at $0$.
\end{theorem}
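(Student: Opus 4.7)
The plan is to apply Lemma 3.2 with $t(z,w) = (z\overline{w})^\alpha$ to establish the infinite product $K$, then invoke Proposition 3.1 to conclude $K$ is a kernel function on $B_{R,0}$. Observe that $t(z,w) = (z\overline{w})^\alpha$ is itself a kernel function on $\mathbb{C}$, being the reproducing kernel of the one-dimensional space spanned by $z^\alpha$, so $k(z,w) = 1 + (z\overline{w})^\alpha$ is a kernel function whose associated Hilbert space has orthonormal basis $\{1, z^\alpha\}$. Since $|c_1| < 1$, the origin is an attracting fixed point of $R$, so $B_{R,0}$ is nonempty; since $R$ is a polynomial of degree at least $1$, the point $\infty$ is a fixed point distinct from $0$ and therefore lies outside $B_{R,0}$. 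Lemma 2.3 then characterizes $B_{R,0}$ as $\{z : \sum_{i=0}^\infty |R^{\circ i}(z)| < \infty\}$.

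The key step is to verify that the set
\[\Omega = \left\{z \in \mathbb{C} : \sum_{n=0}^\infty |R^{\circ n}(z)|^{2\alpha} < \infty \right\}\]
appearing in Lemma 3.2 coincides with $B_{R,0}$. If $z \in \Omega$ then summability forces $R^{\circ n}(z) \to 0$, so $z \in B_{R,0}$. Conversely, the ratio-test computation used in the proof of Lemma 2.3 yields
\[\lim_{n\to\infty} \frac{|R^{\circ n+1}(z)|^{2\alpha}}{|R^{\circ n}(z)|^{2\alpha}} = |c_1|^{2\alpha} < 1\]
for any $z \in B_{R,0}$, hence $\sum_n |R^{\circ n}(z)|^{2\alpha}$ converges and $z \in \Omega$. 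Lemma 3.2 therefore produces the advertised infinite product, and the resulting $K$ satisfies the functional equation $K(z,w) = k(z,w)K(R(z), R(w))$ on $B_{R,0}$.

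To finish via Proposition 3.1 we need continuity of $K$ together with $K(l,l) > 0$ at the common limit $l = 0$. Continuity follows from the local uniform convergence of the defining product on compact subsets of $B_{R,0}$, and $K(0,0) = \prod_{n=0}^\infty (1+0) = 1 > 0$. All hypotheses of Proposition 3.1 then apply, yielding that $K$ is a kernel function on $B_{R,0}$. The main obstacle is the identification $\Omega = B_{R,0}$; once the ratio-test argument of Lemma 2.3 is adapted to the exponent $2\alpha$, every remaining step follows routinely from the results already recorded.
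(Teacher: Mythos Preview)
Your proposal is correct and follows exactly the route the paper sketches: identify the set $\Omega$ of Lemma~3.2 with $B_{R,0}$ via the ratio-test argument underlying Lemma~2.3, then feed the resulting product into Proposition~3.1. The paper itself only cites \cite{jet2016} and points to Lemma~2.3 for the key identification; you have correctly supplied the one nontrivial detail the paper leaves implicit, namely that Lemma~2.3 as stated concerns $\sum |R^{\circ n}(z)|$ rather than $\sum |R^{\circ n}(z)|^{2\alpha}$, and that the same ratio-test computation goes through with the exponent $2\alpha$.
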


The rest of this section is devoted to showing that the results from \cite{alpay2015infinite}, given in Section 3, apply to polynomials belonging to the family 
\[\mathcal{F} = \{R_a(z) = az^{2^{n+2}} - 2az^{2^{n+1}} : a\neq 0\}.\]
\noindent In fact, the motivation for studying this family comes from  example given in \cite{alpay2015infinite}.  In particular, the authors showed that their methods worked for the polynomial $R(z) = z^4 - 2z^2$.  Using a similar approach, we will show that their methods work for any member of the family $\mathcal{F}$.\\

First note that $0$ is a super-attracting fixed point of $R_a$, for all $a\neq 0$.  Thus each $R_a$ has a basin of attraction at $0$.  Setting $t(z,w) = (z\overline{w})^{2^n}$, we may conclude from Theorem 4.1 that
\[K_a(z,w) = \prod_{n=0}^\infty\left(1 + \left[R_a^{\circ n}(z)\overline{R_a^{\circ n}(w)}\right]^{2^n}\right)\]

\noindent is a kernel function on $B_{R_a,0}$.  Denote the corresponding reproducing kernel Hilbert space by $\mathcal{H}_a$.  We now construct an orthonormal basis for $\mathcal{H}_a$.  First we show that $R_a$ satisfies $\ddagger$.  Before we begin, it is worth noting that since $t(z,w) = 1 + (z\overline{w})^{2^n}$, we have that $e_0(z) = 1$ and $e_1(z) = z^{2^n}$.

\begin{proposition}
Let $k(z,w) = 1 + (z\overline{w})^{2^{n}}$ where $n$ is a non-negative integer.  Then $R_a(z) = az^{2^{n+2}} - 2az^{2^{n+1}}$ satisfies $\ddagger$ for all $a\neq 0$.
\end{proposition}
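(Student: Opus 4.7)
The plan is to verify condition $\ddagger$ directly by computing the three relevant sums over the preimages of $z$ under $R_a$, exploiting the fact that $R_a(\zeta) = a\zeta^{2^{n+2}} - 2a\zeta^{2^{n+1}}$ is a polynomial in $\zeta^{2^{n+1}}$. Specifically, I would first make the substitution $u = \zeta^{2^{n+1}}$, which reduces the equation $R_a(\zeta) = z$ to the quadratic $au^2 - 2au - z = 0$, whose two roots $u_+, u_-$ satisfy $u_+ + u_- = 2$ by Vi\`{e}te's formulas. For each root $u_\pm$, the set of $\zeta$ with $\zeta^{2^{n+1}} = u_\pm$ has cardinality $2^{n+1}$ (a generic $z$ avoids the critical values so all preimages are distinct), so $M(z) = 2^{n+2}$.

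With $e_0(z) = 1$ and $e_1(z) = z^{2^n}$, the case $(i,j) = (0,0)$ of $\ddagger$ is immediate, since the sum is just $M(z)$. For the case $(i,j) = (1,1)$, the summand is $\zeta^{2^{n+1}} = u_\pm$, which is constant on each fiber of the map $\zeta \mapsto \zeta^{2^{n+1}}$, so the sum becomes $2^{n+1}(u_+ + u_-) = 2^{n+2}$, and dividing by $M(z)$ gives $1$.

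The remaining case $(i,j) = (0,1)$ (equivalently $(1,0)$) is the substantive step. Fix one preimage $\zeta_0$ with $\zeta_0^{2^{n+1}} = u_\pm$; the full fiber is $\{\omega^k \zeta_0 : 0 \le k < 2^{n+1}\}$ where $\omega$ is a primitive $2^{n+1}$-th root of unity. Then
\[\sum_{\zeta^{2^{n+1}} = u_\pm} \zeta^{2^n} = \zeta_0^{2^n}\sum_{k=0}^{2^{n+1}-1}\omega^{k\cdot 2^n}.\]
The key observation is that $\omega^{2^n}$ is a primitive $2$nd root of unity, namely $-1$, so the geometric sum equals $\sum_{k=0}^{2^{n+1}-1}(-1)^k = 0$. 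Summing the two vanishing contributions from $u_+$ and $u_-$ gives $0$, as required.

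The main (very mild) obstacle is bookkeeping: one must handle the exceptional $z$ for which the fiber is smaller than $2^{n+2}$ (critical values of $R_a$) and confirm these do not obstruct $\ddagger$. I would handle this either by restricting to generic $z \in \Omega$ and invoking continuity, or by observing that at ramified points the sums of monomials still vanish when weighted by multiplicity (i.e., by noting that all computations above are really symmetric-function identities in the roots of the polynomial $a\zeta^{2^{n+2}} - 2a\zeta^{2^{n+1}} - z$, which remain valid with multiplicity counted).
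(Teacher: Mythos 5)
Your proof is correct and follows essentially the same route as the paper's: the substitution $u=\zeta^{2^{n+1}}$ reducing $R_a(\zeta)=z$ to a quadratic with $u_++u_-=2$, the count $M(z)=2^{n+2}$, and cancellation of the $2^n$-th powers over each fiber for the mixed case. If anything you are more careful than the paper, whose final step just writes $\zeta^{2^n}=\pm\sqrt{1\pm\sqrt{a^2+aw}/a}$ without the explicit roots-of-unity cancellation and without addressing the ramified fibers that you flag at the end.
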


\begin{proof}
We need to verify that the following equalities hold for any $w\in\Omega$
\[\dfrac{1}{M(w)}\sum_{R(\zeta)=w}1 = 1,\qquad\qquad \dfrac{1}{M(w)}\sum_{R(\zeta)=w}\zeta^{2^{n+1}} = 1,\qquad\qquad \dfrac{1}{M(w)}\sum_{R(\zeta)=w}\zeta^{2^n} = 0\]

\noindent where $M(w) = \text{Card}\{w\in\Omega\ : R(\zeta) = w\}<\infty$.  That $M(w) = 2^{n+2}$ follows from the fundamental theorem of algebra and that $\Omega$ is completely invariant with respect to $R$.  Thus the first equality follows.  From the substitution $u = \zeta^{2^{n+1}}$, we can determine that
\[\zeta^{2^{n+1}} = 1  \pm \dfrac{\sqrt{a^2 + aw} }{a}\]
\noindent which implies the second equality.  Taking the square root of both sides of the previous equality gives
\[\zeta^{2^n} = \pm\sqrt{1  \pm \dfrac{\sqrt{a^2 + aw} }{a}}\] 

\noindent which implies the final equality.
\end{proof}

By Theorem 3.3, the operators $S_0$ and $S_1$, defined on $\mathcal{H}_a$ by
 \[S_0(f)(z) = e_0(z)f(R_a(z)) = f(R_a(z))\] and \[S_1(f)(z) = e_1(z)f(R_a(z)) = z^{2^n}f(R_a(z)),\]
\noindent satisfy the Cuntz relations.  Therefore, by Theorem 3.4, the functions \[b_v(z) = S_v\mathbf{1}(z),\text{ where }v\in J^\infty,\] \noindent form an orthonormal basis of $\mathcal{H}_a$.  We will now study the dynamics of these basis vectors as $a$ varies.

\section{Dynamics of the family $\mathcal{F}$}

Fix $a\neq 0$, and consider the collection of basis vectors for $\mathcal{H}_a$, which we will denote $\mathcal{B}_a = \{b_{v,a}(z) :  v\in J^\infty\}$.  We want to be able to say something about how these $b_{v,a}$ look, and we will see that their appearance has very little to do with our choice of $a$.  Our two ``simplest'' basis vectors may be obtained when $v$ consists of a single component.  They are given by
\[S_0\mathbf{1}(z) = e_0(z)\mathbf{1}(R_a(z)) = \mathbf{1}(z) \text{ and }S_1\mathbf{1}(z) = e_1(z)\mathbf{1}(R_a(z)) = z^{2^n}.\]
\noindent If we let $v$ consist of two components, then we gain two basis vectors which are distinct from the previous two:
\[S_0S_1\mathbf{1}(z) = S_0(z^{2^n}) = e_0(z)(R_a(z))^{2^n} = \sum_{k=0}^{2^n}\alpha_k(a)z^{2^{2n+2}-2^{n+1}k}\]
\noindent and

\[S_1S_1\mathbf{1}(z) = S_1(z^{2^n}) = e_1(z)(R_a(z))^{2^n} = \sum_{k=0}^{2^n}\alpha_k(a)z^{2^{2n+2}-2^{n+1}k + 2^n}\]
\noindent where $\alpha_k(a) = (-2)^k\binom{2^n}{k}a^{2^n}$, for $0\leq k\leq 2^n$.  One way to describe these two basis vectors is that they are polynomials whose non-zero coefficients, $\alpha_k(a)$, are polynomials in $a$ with the following property: each $\alpha_k(a)$ has coefficients in $\mathbb{Z}$ and constant term equal to $0$.  For ease of exposition, we will say that any polynomial fitting the above description has ``good form''.  With the exception of the first two ``simple'' basis vectors, we want to show that every other basis vector has good form.  To this end we prove the following lemma:

\begin{lemma}
If $f$ is a polynomial with good form, then both $S_0 f$ and $S_1 f$ have good form as well.
\end{lemma}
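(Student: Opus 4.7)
The plan is to unwind the definitions and track the coefficients carefully, exploiting the crucial fact that every coefficient of $R_a$ lies in $a\mathbb{Z}$. Writing $f(z) = \sum_{j} c_{j}(a) z^{d_j}$ with each non-zero $c_{j}(a) \in \mathbb{Z}[a]$ satisfying $c_{j}(0) = 0$ (so $c_{j}(a) \in a\mathbb{Z}[a]$), I would first compute
\[
S_0 f(z) = f(R_a(z)) = \sum_{j} c_{j}(a)\,R_a(z)^{d_j}
\]
and then analyze the $z$-coefficients as elements of $\mathbb{Z}[a]$.

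The key observation is that $R_a(z) = a\bigl(z^{2^{n+2}} - 2 z^{2^{n+1}}\bigr)$, so
\[
R_a(z)^{d_j} = a^{d_j}\bigl(z^{2^{n+2}} - 2 z^{2^{n+1}}\bigr)^{d_j}.
\]
By the binomial theorem, every $z$-coefficient of the polynomial on the right lies in $a^{d_j}\mathbb{Z}$. Multiplying by $c_{j}(a) \in a\mathbb{Z}[a]$, each $z$-coefficient of $c_{j}(a) R_a(z)^{d_j}$ lies in $a^{d_j+1}\mathbb{Z}[a] \subseteq a\mathbb{Z}[a]$. Summing over $j$ and collecting like powers of $z$ preserves this property, because $a\mathbb{Z}[a]$ is closed under addition — in particular, no cancellation among summands can manufacture a non-zero constant term. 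Hence every non-zero $z$-coefficient of $S_0 f(z)$ is an element of $\mathbb{Z}[a]$ with zero constant term, i.e., $S_0 f$ has good form.

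For $S_1$, observe that $S_1 f(z) = z^{2^n} \cdot S_0 f(z)$, which merely shifts every $z$-exponent upward by $2^n$ and does not touch the coefficients themselves. So $S_1 f$ automatically inherits good form from $S_0 f$.

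The whole argument is a bookkeeping calculation in the ring $\mathbb{Z}[a]$, and I do not anticipate a genuine obstacle. The one point that deserves care is verifying that summing over $j$ cannot destroy the zero-constant-term condition, which is handled by the ideal-theoretic remark that $a\mathbb{Z}[a]$ is closed under addition. Factoring the single $a$ out of $R_a$ at the very beginning is what makes the constant-term bookkeeping trivial; without that factorization one would be tempted to track coefficients by hand, and the binomial sums would become unnecessarily cumbersome.
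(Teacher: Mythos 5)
Your proposal is correct and follows essentially the same route as the paper: expand $S_0f(z)=\sum_j c_j(a)R_a(z)^{d_j}$, use the binomial theorem on $R_a(z)^{d_j}$ to see that every resulting $z$-coefficient lies in $a\mathbb{Z}[a]$ (the paper writes out the expansion $\sum_k a_i(-2)^k\binom{i}{k}a^i z^{2^{n+1}(2i-k)}$ explicitly, which is the same computation with the factor $a^{d_j}$ left inside), and then handle $S_1f=z^{2^n}S_0f$ as a degree shift. Your explicit remark that $a\mathbb{Z}[a]$ is an additive ideal, so cancellation when collecting like powers of $z$ cannot produce a non-zero constant term, is a small point the paper leaves implicit, and is a worthwhile clarification.
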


\begin{proof}
If $f$ has good form, then we must have that $f(z) = \sum\limits_{i=1}^ma_iz^i$, where $a_i$ is a polynomial in $a$ with coefficients in $\mathbb{Z}$ and constant term equal to $0$.  We have that $S_0f(z) = e_0(z)f(R_a(z)) = \sum\limits_{i=1}^ma_i(R_a(z))^i$.  Observe that for each $i$, we have that

\[a_i(R_a(z))^i = \sum_{k=0}^ia_i(-2)^k\binom{i}{k}a^iz^{2^{n+1}(2i-k)},\]

\noindent which is a polynomial with good form.  Since $S_0f(z)$ is a sum of polynomials having good form, it too is a polynomial with good form.  Lastly, observe that 
\[S_1f(z) = e_1(z)f(R_a(z)) = z^{2^n}(S_0 f(z))\]
\noindent and so $S_1f(z)$ is a polynomial with good form as well.
\end{proof}

\begin{corollary}
If $v\in J^\infty$ such that $b_{v,a}(z)\neq 1$ and $b_{v,a}(z)\neq z^{2^n}$, then $b_{v,a}(z)$ is a polynomial with good form.
\end{corollary}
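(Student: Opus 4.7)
The plan is to invoke Lemma 5.1 after isolating precisely which words $v$ fail to take us into the class of good form polynomials. The crucial starting point is that $S_0\mathbf{1}(z) = \mathbf{1}(R_a(z)) = 1$, so $S_0$ fixes the constant function $\mathbf{1}$; this means that any trailing zeros at the innermost positions of the composition $S_{j_1}\cdots S_{j_N}\mathbf{1}$ simply collapse, and a single $S_1$ acting on $\mathbf{1}$ just produces $z^{2^n}$.

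Given $v = (j_1,\ldots,j_N)\in J^\infty$, I would locate the largest index $m$ (if any) with $j_m = 1$. If no such $m$ exists, then $v$ consists entirely of zeros and $b_{v,a} = \mathbf{1}$, which is an excluded case. Otherwise the identity $S_0\mathbf{1}=\mathbf{1}$ collapses the trailing zeros, giving $S_{j_m}\cdots S_{j_N}\mathbf{1} = S_1\mathbf{1} = z^{2^n}$. The case $m = 1$ then yields $b_{v,a} = z^{2^n}$, the other excluded case; so I may assume $m\ge 2$ and reduce to evaluating $S_{j_1}\cdots S_{j_{m-1}}(z^{2^n})$.

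The first step of this remaining evaluation is $S_{j_{m-1}}(z^{2^n})$, which is one of the two polynomials $S_0S_1\mathbf{1}$ or $S_1S_1\mathbf{1}$ computed explicitly in the discussion preceding Lemma 5.1; both were shown there to have good form. A finite induction on the remaining applications $S_{j_{m-2}},\ldots,S_{j_1}$, using Lemma 5.1 at each step, then shows that $b_{v,a}$ itself has good form. The main subtlety to keep track of is that the map $v\mapsto b_{v,a}$ is far from injective---every word ending in a string of zeros collapses---so one must take care to identify \emph{all} the words producing the excluded vectors $1$ and $z^{2^n}$ before concluding good form in all remaining cases.
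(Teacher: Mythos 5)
Your proof is correct and follows essentially the same route as the paper: induct on the word length using Lemma 5.1, with $S_0S_1\mathbf{1}$ and $S_1S_1\mathbf{1}$ as the base cases. You are in fact somewhat more careful than the paper's one-line argument, since you explicitly account for the collapse of trailing zeros via $S_0\mathbf{1}=\mathbf{1}$ and thereby identify exactly which words produce the excluded vectors $1$ and $z^{2^n}$ --- a detail the paper leaves implicit.
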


\begin{proof}
This follows inductively from the fact that both $S_0S_1\mathbf{1}(z)$ and $S_1S_1\mathbf{1}(z)$ are polynomials with good form.
\end{proof}

 An important observation from the above computations is the following:
 
 \begin{corollary}
 Suppose that $R_{a_1}$ and $R_{a_2}$ are polynomials in $\mathcal{F}$ and let
\[\beta_i(x)=\sum_{l=0}^{k_i}c_{i_l}x^l\]
\[\text{Then }\, b_{v,a_1}(z) = \sum_{i=0}^n\beta_i(a_1)z^i\text{ if and only if }\, b_{v,a_2}(z) = \sum_{i=0}^n\beta_i(a_2)z^i\]
 \end{corollary}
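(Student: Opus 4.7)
The plan is to prove this by induction on the length $N$ of the word $v\in J^\infty$, showing that there exist \emph{universal} polynomials $\beta_i(x)\in\mathbb{Z}[x]$ (depending only on $v$, not on the parameter) such that $b_{v,a}(z) = \sum_i \beta_i(a)z^i$ for every admissible $a$. Once this is established, the biconditional of the statement is immediate: the coefficient polynomials $\beta_i$ are intrinsic to $v$, so substituting $a_1$ or $a_2$ into the same $\beta_i$ produces the corresponding basis vector in either Hilbert space.

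Conceptually, I would view $S_0$ and $S_1$ as operators on the ring $\mathbb{Z}[x][z]$ obtained by replacing the parameter $a$ with a formal indeterminate $x$; that is, let $\tilde R(z) = xz^{2^{n+2}}-2xz^{2^{n+1}}$ and define $\tilde S_0(f)(z) = f(\tilde R(z))$, $\tilde S_1(f)(z) = z^{2^n}f(\tilde R(z))$. The point is that $\tilde S_0,\tilde S_1$ preserve $\mathbb{Z}[x][z]$ and commute with the evaluation homomorphism $x\mapsto a$, so whatever word we apply to $\mathbf{1}\in\mathbb{Z}[x][z]$ produces a polynomial $\tilde b_v(x,z)\in\mathbb{Z}[x][z]$ whose evaluation at $x=a$ is exactly $b_{v,a}(z)$.

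For the base case, the words of length one give $\mathbf{1}$ and $z^{2^n}$, both already lying in $\mathbb{Z}[x][z]$ with $x$-independent coefficients. For the inductive step, suppose $\tilde b_v(x,z) = \sum_i \beta_i(x)z^i$. Then
\[
\tilde S_0\tilde b_v(x,z) \;=\; \sum_i \beta_i(x)\sum_{k=0}^i (-2)^k\binom{i}{k}x^i\,z^{2^{n+1}(2i-k)},
\]
which is again in $\mathbb{Z}[x][z]$ with coefficients assembled from the old $\beta_i(x)$ via fixed combinatorial factors that make no reference to any numerical choice of $x$. The operator $\tilde S_1$ simply multiplies this by $z^{2^n}$, which preserves the property. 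Thus by induction every $\tilde b_v(x,z)$ lies in $\mathbb{Z}[x][z]$ with universal coefficient polynomials $\beta_i(x)$.

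There is no real obstacle beyond making the universality precise; the content is essentially a careful restatement of the observation already used in Lemma 5.1 that the $S_j$ act on polynomials by operations that are polynomial in $a$. The only mild subtlety is noting that the expansion of $(R_a(z))^i$ treats $a$ formally, so the coefficient polynomials produced depend only on $i$ and the multi-index structure inherited from $v$, not on the numerical parameter being substituted.
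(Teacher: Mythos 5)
Your proposal is correct and is essentially the argument the paper intends: Corollary 5.3 is presented as an immediate consequence of the computation in Lemma 5.1, whose inductive step already shows that the coefficient polynomials are assembled from fixed integer combinatorial data (the factors $(-2)^k\binom{i}{k}$ and the exponent bookkeeping) depending only on $v$ and not on the numerical value of the parameter. Your device of working in $\mathbb{Z}[x][z]$ with a formal indeterminate and invoking the evaluation homomorphism $x\mapsto a$ is a clean formalization of that same observation, and it correctly pins down the sense in which the $\beta_i$ are universal --- which is exactly what the ``if and only if'' in the statement requires to be meaningful.
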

 
 \noindent Since the $b_{v,a}$ are polynomials, we can think of them as elements in the topological space of rational maps on the Riemann sphere.  In this sense, we can say that the $b_{v,a}$ vary continuously as $a$ varies.
 
 \begin{theorem}
For each $v\in J^\infty$, the map $\Gamma_v$ on $\mathbb{C}/\{0\}$ given by $a\mapsto b_{v,a}$ is continuous.
\end{theorem}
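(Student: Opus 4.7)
My plan is to reduce the continuity claim to the elementary fact that polynomials in $a$ are continuous functions of $a$, by combining the "good form" structure from Corollary 5.2 with a degree-preservation argument that lets me work in a finite-dimensional space of polynomials of fixed degree.

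First I would dispose of the two trivial cases: if $b_{v,a}(z) = 1$ or $b_{v,a}(z) = z^{2^n}$, then $\Gamma_v$ is a constant map into the space of rational maps and hence continuous. For all other $v\in J^\infty$, Corollary 5.2 tells us that $b_{v,a}(z) = \sum_{i=0}^{d_v(a)} \beta_i(a)\, z^i$ where each coefficient $\beta_i$ is a polynomial in $a$ with integer coefficients and zero constant term. In particular, each $\beta_i : \mathbb{C} \to \mathbb{C}$ is continuous (indeed entire).

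Next I would argue that the degree $d_v$ of $b_{v,a}$ in $z$ is independent of $a\in\mathbb{C}\setminus\{0\}$. This is an induction on the length of the multi-index $v$. The operators act on the degree of a polynomial by $\deg(S_0 f) = 2^{n+2}\deg(f)$ and $\deg(S_1 f) = 2^{n+2}\deg(f) + 2^n$, provided the leading term does not vanish. Since the leading term of $R_a(z)^k$ is $a^k z^{k\cdot 2^{n+2}}$, the nonvanishing condition is exactly $a\neq 0$. Thus for every $a\neq 0$ the polynomial $b_{v,a}$ has the same degree $d_v$, and in particular the leading coefficient $\beta_{d_v}(a)$ is a polynomial in $a$ that does not vanish on $\mathbb{C}\setminus\{0\}$.

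Finally I would invoke the standard fact that the natural topology on rational self-maps of $\widehat{\mathbb{C}}$ (uniform convergence with respect to the chordal metric) restricted to polynomials of a fixed degree $d_v$ agrees with the topology of coefficient-wise convergence in $\mathbb{C}^{d_v+1}$. Hence continuity of $a \mapsto b_{v,a}$ reduces to continuity of the coefficient map $a \mapsto (\beta_0(a), \beta_1(a), \dots, \beta_{d_v}(a))$, which is immediate since each $\beta_i$ is a polynomial in $a$. The main conceptual obstacle is justifying that we genuinely stay inside the stratum of polynomials of a single fixed degree so that the topology on the ambient space of rational maps reduces to coefficient convergence; this is precisely why the theorem is stated on $\mathbb{C}\setminus\{0\}$ rather than on all of $\mathbb{C}$, since at $a=0$ the polynomial $R_a$ collapses to $0$ and the degree drops.
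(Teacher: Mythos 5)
Your proposal is correct and follows essentially the same route as the paper: both reduce continuity of $\Gamma_v$ to the continuity of the universal coefficient polynomials $\beta_i(x)$ supplied by Corollary 5.3. You are in fact more careful than the paper's own proof, which silently assumes a fixed degree $m$ and does not identify the topology on rational maps; your degree-preservation induction (using that the leading coefficient of $R_a^k$ is $a^k\neq 0$) and your explicit appeal to the equivalence of coefficient-wise convergence with convergence of degree-$d$ rational maps fill exactly the gaps the paper leaves implicit.
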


\begin{proof}
Let $a_n$ be a sequence in $\mathbb{C}/\{0\}$ converging to $a\in\mathbb{C}/\{0\}$.  Then we have
\[\lim_{n\rightarrow\infty}\Gamma_v(a_n) = \lim_{n\rightarrow\infty}b_{v,a_n} = \lim_{n\rightarrow\infty}\sum_{i=0}^m\beta_i(a_n)z^i = \sum_{i=0}^m\beta_i(a)z^i = b_{v,a} = \Gamma(a)\]

\noindent Since the coefficients of $b_v$ is given by the polynomial $\beta_i(x)$ which is continuous, we have that $\Gamma_v$ is continuous as well.
\end{proof}

\noindent The idea with this theorem is that the basis vectors $b_{v,a}$ approximate, in the sense of continuity, their ``neighboring'' basis vectors.

\newpage

\bibliographystyle{amsplain}


\end{document}